\documentclass[11pt]{article}

\usepackage[utf8]{inputenc}
\usepackage{amsmath,amsthm}
\usepackage{amssymb,amsfonts}
\usepackage[sort]{cite}
\usepackage{enumitem}
\setenumerate{label=(\arabic*),itemsep=3pt,topsep=3pt}
\usepackage{subcaption}
\usepackage{hyperref}
\hypersetup{
   colorlinks = true,
   linkcolor = blue,
   anchorcolor = blue,
   citecolor = red,
   filecolor = blue,
   urlcolor = blue
}
\numberwithin{equation}{section}
\usepackage{tikz}
\usepackage[dvipsnames]{xcolor}
\usepackage[cmtip,all]{xy}
\newcommand{\longsquigarrow}{\xymatrix{{}\ar@{~>}[r]&{}}}
\usepackage{pmboxdraw}

\usepackage{listings}
\definecolor{maroon}{RGB}{133, 5, 63}
\definecolor{forestgreen}{RGB}{34, 139, 34}
\lstdefinelanguage{code}{
basicstyle=\small\ttfamily,
alsoletter=",
classoffset=1,
keywords={numerical_irreducible_decomposition, nid},
keywordstyle={\color{maroon}},
classoffset=2,
morekeywords={using, @var},
keywordstyle={\color{blue}},
classoffset=3,
morekeywords={julia, >},
keywordstyle={\color{forestgreen}},
xleftmargin=2cm,
xrightmargin=2cm,
columns=fullflexible,
keepspaces=true,
literate={•}{{\textbullet}}1
 {╭}{{\textSFi}}1 {╮}{{\textSFiii}}1 {╰}{{\textSFii}}1 {╯}{{\textSFiv}}1
 {─}{{\textSFx}}1 {│}{{\textSFxi}}1
 {├}{{\textSFviii}}1 {┤}{{\textSFix}}1
 {┬}{{\textSFvi}}1 {┴}{{\textSFvii}}1 {┼}{{\textSFv}}1,
}
\DeclareRobustCommand{\code}[1]{\lstinline!#1!}
\theoremstyle{plain} 
\newtheorem{theorem}{Theorem}[section]
\newtheorem{proposition}[theorem]{Proposition}
\newtheorem{corollary}[theorem]{Corollary}

\theoremstyle{definition} 
\newtheorem{definition}[theorem]{Definition}

\theoremstyle{remark} 
\newtheorem{remark}[theorem]{Remark}
\newtheorem{example}[theorem]{Example}

\usepackage[margin=1in]{geometry}
\begin{document}
\title{Numerical Irreducible Decomposition in \texttt{Julia}}

\author{Paul Breiding}
\date{}

\maketitle

\begin{abstract}
This article introduces a new implementation for computing a \emph{numerical irreducible decomposition} for a system of polynomial equations. The implementation is part of the software package \texttt{HomotopyContinuation.jl} \cite{homotopycontinuation} written in the programming language \texttt{Julia} \cite{julia}.

\noindent \emph{Keywords:} homotopy continuation, numerical algebraic geometry.
\end{abstract}
\bigskip

\bigskip
\section{Introduction}\label{sec:introduction}

Consider a system of polynomials with complex coefficients
$$F(x)\ =\ \begin{pmatrix}\; f_1(x)\;\; \\ \vdots\\\; f_k(x)\;\; \end{pmatrix},\qquad x = (x_1,\ldots, x_n).$$
Its zero set is an \emph{algebraic} variety 
 $$X = X(F)= \big\{x\in\mathbb C^n \mid f_1(x) = \cdots = f_k(x)=0\big\}.$$ 
\emph{Numerical irreducible decomposition} (NID) is an algorithmic framework for computing the \emph{irreducible components} 
\begin{equation}\label{irred_decomp}
X= X_1\cup\cdots\cup X_\ell
\end{equation}
of $X$ by numerical homotopy continuation. The goal is to compute a numerical representation of these components given $F$.

The software \texttt{HomotopyContinuation.jl}~\cite{homotopycontinuation} provides the function \code{nid} that computes a numerical irreducible decomposition. This article describes the implementation starting from v2.22.2.

\begin{example}\label{ex11}
Let us consider the following system of 3 polynomials in 3 variables from \cite{bates2024numericalnonlinearalgebra}:
 \begin{equation}\label{ex:nag}
  F(x,y,z)\ =\ \begin{pmatrix} p\cdot q \cdot (x-3)\cdot (x-5)\\ p\cdot q\cdot (y-3)\cdot (y-5)\\ p\cdot (z-3)\cdot (z-5) \end{pmatrix}, \quad\text{ where }
  \; \begin{array}{l}
p = p(x,y,z) = xy-x^2 - z + 1,\\[0.5em]
q = q(x,y,z) = x^4 + x^2 - y - 1.
\end{array}
 \end{equation}
The zero set $X(F)$ consists of a surface, two curves (one with $z=3$, one with $z=5$), and eight points. In total, there are 11 components. Figure \ref{fig1} shows the surface in blue, the two curves in red, and the eight points in green.

\begin{figure}
\begin{center}
\includegraphics[height = 7.5cm]{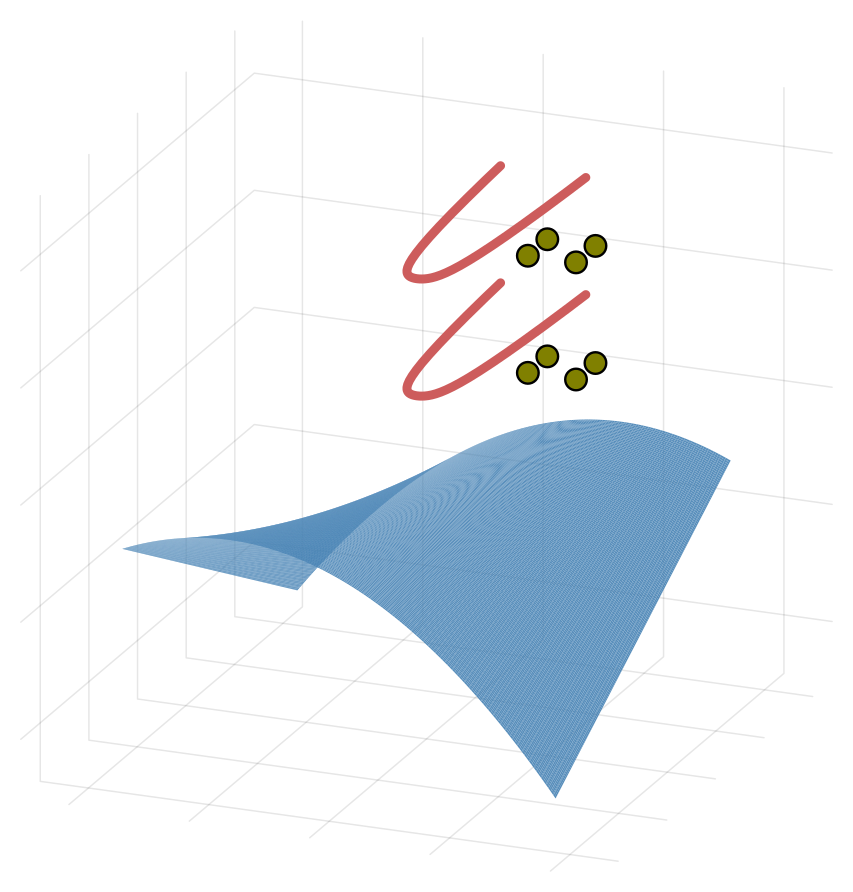}
\end{center}
\caption{\label{fig1}The zero set of the system of equations in \eqref{ex:nag}.}
\end{figure}

~

\vspace{-0.15cm}
We compute the decomposition of $X(F)$ in \texttt{HomotopyContinuation.jl}.
First, we set up the system $F$.
\begin{lstlisting}
julia> using HomotopyContinuation
julia> @var x y z
julia> p = x * y - x^2 - z + 1
julia> q = x^4 + x^2 - y - 1
julia> F = [p * q * (x - 3) * (x - 5);
            p * q * (y - 3) * (y - 5);
            p * (z - 3) * (z - 5)]
\end{lstlisting}

Then, we compute the numerical irreducible decomposition of $X(F)$ using the \code{nid} function.
\begin{lstlisting}
julia> N = nid(F)
Numerical irreducible decomposition with 11 components
======================================================
• 1 component(s) of dimension 2.
• 2 component(s) of dimension 1.
• 8 component(s) of dimension 0.
 degree table of components:
╭───────────┬──────────────────────────╮
│ dimension │  degrees of components   │
├───────────┼──────────────────────────┤
│     2     │            2             │
│     1     │          (4, 4)          │
│     0     │ (1, 1, 1, 1, 1, 1, 1, 1) │
╰───────────┴──────────────────────────╯
\end{lstlisting}

\noindent The output reports $11$ irreducible components, $1$ surface, $2$ curves, and 8 points. This is consistent with what we see in Figure \ref{fig1}.
The table at the end of the printed output shows the \emph{degrees} of the components (the degree is an algebraic invariant associated to each component; see Definition~\ref{def:degree}). The surface in Figure \ref{fig1} is a \emph{quadric surface} (it has degree $2$), and the curves are quartic curves (they both have degree $4$). A single point has, by definition, degree~$1$.

For the complete API of numerical irreducible decomposition in \texttt{HomotopyContinuation.jl} we refer to the \href{https://www.juliahomotopycontinuation.org/HomotopyContinuation.jl/stable/}{online documentation} available at \href{https://www.juliahomotopycontinuation.org}{juliahomotopycontinuation.org}.
\end{example}

The idea of numerical irreducible decomposition goes back to the works of Sommese, Verschelde and Wampler \cite{SVW2001, SVW2001b, SVW2002}. Implementations are available in \texttt{Bertini} \cite{BHSW06}, \texttt{Macaulay2} \cite{NumericalAlgebraicGeometryArticle}, and \texttt{PHCPack} \cite{SVW2003, PHCpack}. 
The contribution of this article is a new implementation in \texttt{HomotopyContinuation.jl}. In particular:
\begin{itemize}
\item We describe our implementation of numerical irreducible decomposition. It is based on the \emph{$u$-regeneration} algorithm by Duff, Leykin, and Rodriguez~\cite{u-regen} (see Sections \ref{sec:u_regen} and  \ref{implementation_u_regen}), and the monodromy decomposition algorithm (see Section \ref{sec:decomposition}).
\item We describe implementation choices that improve the numerical stability of $u$-regeneration, in particular the three-step formulation of the $u$-intersection step.
\item We prove Theorem \ref{thm_geodesics} describing geodesics in the complex Grassmannian and use this to formulate homotopies moving linear spaces.
\end{itemize}

\begin{example}
It is possible to call the two steps,  $u$-regeneration and decomposition,
individually. If \code{F} is a system of polynomials in \texttt{HomotopyContinuation.jl}, then 
\begin{lstlisting}
julia> regen = regeneration(F)
julia> dec = decompose(regen)
\end{lstlisting}
will first run $u$-regeneration and return the computed witness sets as \code{regen}. The second step decomposes the witness sets in \code{regen} and returns the decomposed witness sets as \code{dec}. To wrap \code{dec} into a \texttt{NumericalIrreducibleDecomposition} as in Example \ref{ex11} run 
\begin{lstlisting}
julia> NumericalIrreducibleDecomposition(dec)
\end{lstlisting}
\end{example}

The rest of the paper is organized as follows: 
The next section summarizes the theory behind witness sets, which is the main data structure used in NID. Section \ref{sec:u_regen} explains how $u$-regeneration works, and Section \ref{sec:linear_spaces} considers homotopies for linear slices. In Section \ref{sec:implementation} we discuss the details of the implementation and prove a correctness result (Theorem \ref{thm_correctness}).

\bigskip 
\section{Witness sets}\label{sec:ws}

In NID irreducible components are encoded by \emph{witness sets}. \code{N = nid(F)} computes a list of witness sets -- one for each irreducible component. \code{witness_sets(N)} returns a dictionary containing these witness sets.

In this section, we give a brief introduction to witness sets. For more details see \cite{BertiniBook, SommeseWamplerBook}. 

Recall that a variety $X$ is irreducible if every decomposition $X=X'\cup X''$ into varieties $X'$ and $X''$ implies that $X=X'$ or $X=X''$.
Witness sets encode irreducible varieties using linear intersections. The key property used is that irreducible varieties have a \emph{degree}. To define this, we introduce the \emph{affine Grassmannian}:
\begin{equation}\label{def:G_aff}
G_{\mathrm{aff}}(j,n) = \{L \subset \mathbb C^n \mid L \text{ is affine linear of dimension } j\}.
\end{equation}

\begin{definition}[Degree]\label{def:degree}
Let $X\subset \mathbb C^n$ be an irreducible variety of codimension $j$. There is a number $0<d<\infty$ such that for a generic affine linear subspace $L\in G_{\mathrm{aff}}(j,n)$ we have $$ \mathrm{deg}(X) := d = \#(X\cap L).$$ This number is called the \emph{degree} of $X$. 
\end{definition}
\begin{definition}[Witness sets, following the definition in \cite{breiding2026eliminationeliminatingcomputingcomplements}]\label{def:witness_set}~
\begin{enumerate}\item 
Let $X \subset \mathbb{C}^n$ be an irreducible variety of codimension $j$.
A \textit{witness set} for $X$ is a triple $(F,L, P)$, where 
\begin{enumerate}
\item[(a)] $F$ is a system of polynomials such that $X$ is an irreducible component of $X(F)$,
\item[(b)] $L\in G_{\mathrm{aff}}(j,n)$ is a generic affine linear subspace,
\item[(c)] $P = X \cap L$ contains $\mathrm{deg}(X)$ points.
\end{enumerate} 
$F$ is called a {\em witness system}, $L$ is the {\em witness slice}, and $P$ is the {\em witness point set}.
\item If $X$ is of pure codimension $j$, but not necessarily irreducible, let $X_1,\ldots,X_\ell$ be its irreducible components. We call $(F, L, P_1\cup \cdots\cup P_\ell)$ a witness set for $X$, where $P_i = X_i\cap L$.
\item If $W=(F, L, P)$ is a witness set, let $d = \# P$. We say that $W$ has degree $d$. 
\item We call the witness set $(F,L, P)$ \emph{reduced} if for all points $x\in P$ we have $\mathrm{rank}(JF(x)) = j$, where $JF(x)$ denotes the Jacobian of $F$ at $x$. 
\end{enumerate}
\end{definition}

\begin{remark}
The word \emph{generic} in both definitions means ``all points outside a certain algebraic subvariety''. This is slightly stronger than ``almost all points''.
\end{remark}

\begin{example}
As an example for a witness set, let us again consider the system from \eqref{ex:nag}. One irreducible component is the blue degree-2 surface in Figure \ref{fig1}. If we intersect this surface with a generic line, we get $2$ points. This can be seen in Figure \ref{fig2}. The system $F$ from \eqref{ex:nag} together with the red line and the two yellow points from Figure \ref{fig2} constitute a witness set for the blue surface. 

\begin{figure}
\begin{center}
\includegraphics[height = 7cm]{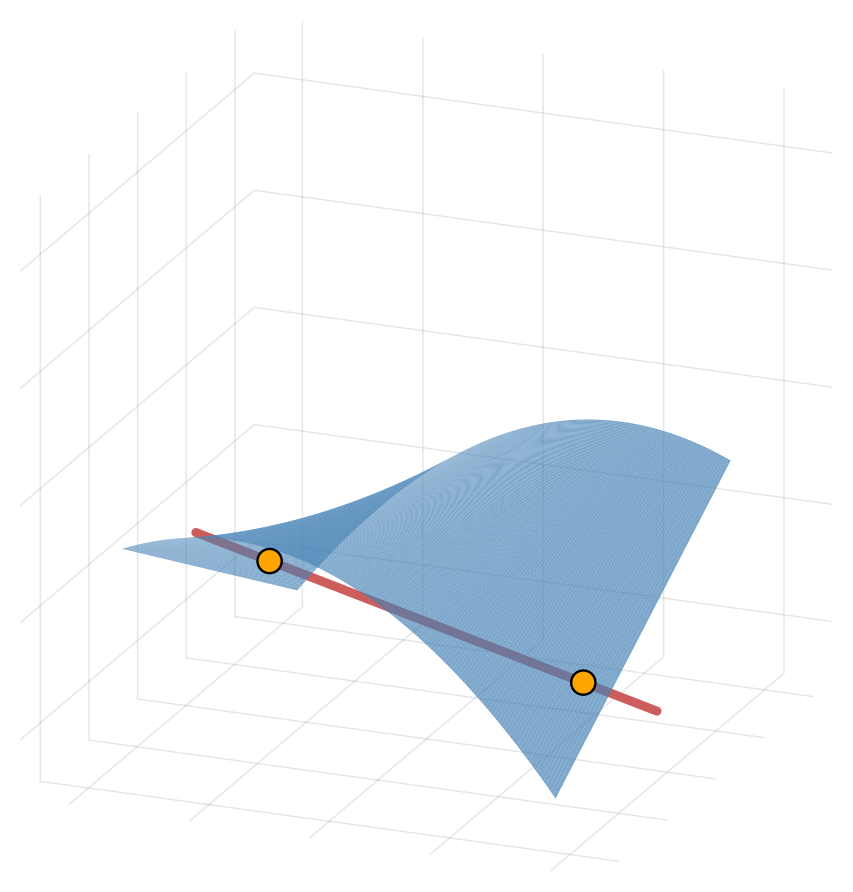}
\end{center}
\caption{\label{fig2} The degree-2 surface from Figure \ref{fig1} intersected with a line yields two points.}
\end{figure}
\end{example}

\begin{remark}\label{rem:NPD}
The implementation is restricted to reduced witness sets. Non-reduced components are discarded. If the Jacobian is rank-deficient at a point $x$, predictor-corrector methods in homotopy continuation fail at $x$. Therefore, neither $u$-regeneration nor monodromy decomposition can handle multiplicities, because they both take witness points as starting points for homotopy continuation. A numerical treatment of non-reduced structure would require numerical primary decomposition \cite{Leykin2008, CHEN20221, Manssour2021LinearPW}, which is beyond the scope of this article.
\end{remark}

Irreducibility of witness sets may not be known at construction time. When we only compute the witness points $P = X \cap L$, we do not know whether $P$ represents an irreducible variety or not. We have this information only after we have run the decomposition step (Section \ref{sec:decomposition}). 
In \texttt{HomotopyContinuation.jl} each witness set records whether it has been verified to be irreducible. 
\begin{example}[Irreducible witness sets] We compute a witness set for dimension $1$ directly. 
\begin{lstlisting}
julia> @var x y; f = x^2 + y^2 - 1
julia> W = witness_set(f; dim = 1);
julia> is_irreducible(W)
:undecided
\end{lstlisting} 
Here, \code{witness_set(f; dim = 1)} will directly solve $f(x,y)=\ell(x,y)=0$, where $\ell(x,y)$ is a random affine linear equations, and not run regeneration. 

Just computing the witness set does not tell us whether or not it represents an irreducible variety. This is why \code{is_irreducible(W)} returns \code{:undecided}. If we run \code{decompose} on \code{W}, then it will decompose it into irreducible components and assign to each the information that it is indeed irreducible:\enlargethispage{\baselineskip}
\begin{lstlisting}
julia> dec = decompose(W);
julia> W_irred = first(dec);
julia> is_irreducible(W_irred)
true
\end{lstlisting} 
Indeed, \code{is_irreducible(W_irred)} returns \code{true}, because \code{decompose} has assigned this information to~\code{W_irred}.
\end{example}

\subsection{What we can do with witness sets}
Witness sets can be used for sampling, for testing membership of points, and for computing intersections~\cite{bates2024numericalnonlinearalgebra,hauenstein2012whatisnag,IntersectingWitnessSets,SommeseWamplerBook}. \texttt{HomotopyContinuation.jl} provides functions for this. 

\begin{example}[Sampling]
We can sample points by moving the witness slice $L$ of a witness set $W=(F,L,P)$. The witness points $P$ move along, and we can collect them for sampling. Here is how to move linear spaces in \texttt{HomotopyContinuation.jl}.

We use the example from \eqref{ex:nag} and take a witness set for its codimension-1 component $\{p=0\}$, where $p =xy-x^2 - z + 1.$
\begin{lstlisting}
julia> @var x y z
julia> p = x * y - x^2 - z + 1
julia> W = witness_set(p; codim = 1)
Witness set for dimension 2 of degree 2
\end{lstlisting}

\noindent Consider the line $L=\{x=y=1\}$. We can use the constructor \code{LinearSubspace(A, b)} which defines the subspace $Ax=b$. 
\begin{lstlisting}
julia> A = [1 0 0; 0 1 0]; b = [1; 1]
julia> L = LinearSubspace(A, b)
1-dim. affine linear subspace {x | Ax=b}:
A:
[1.0 0.0 0.0; 0.0 1.0 0.0]
b:
[1.0, 1.0]
\end{lstlisting}
We move $W$ to $L$:
\begin{lstlisting}
julia> witness_set(W, L)
Witness set for dimension 2 of degree 1
\end{lstlisting}
The returned object has degree $1$, because for $x=y=1$ the equation $xy-x^2 - z + 1=0$ only has one zero and not two ($L$ is not a generic line). 
\end{example}

\begin{example}[Membership testing]\label{ex:membership}
We use the example from \eqref{ex:nag} and consider its codimension-2 part $\{q=(z-3)(z-5)=0\}$. 
\begin{lstlisting}
julia> @var x y z
julia> q = x^4 + x^2 - y - 1
julia> W = witness_set([q; (z - 3) * (z - 5)]; codim = 2)
Witness set for dimension 1 of degree 8
\end{lstlisting}
\noindent Recall that computing a witness set does not decide whether it represents an irreducible variety. In fact, \code{W} has degree 8 and represents the union of the two degree-4 curves in Figure \ref{fig1}. 

We can test membership in the variety represented by \code{W}. Let us first test a random point. 
\begin{lstlisting}
julia> pt = randn(3)
julia> membership(pt, W)
false 
\end{lstlisting}
\noindent Now, we check membership for the points in \code{W}. 
\begin{lstlisting}
julia> P = solutions(W)
julia> membership(P, W)
\end{lstlisting}
This returns a vector of \code{true} values of length $\mathrm{deg}(W)=8$. 
In particular, \code{membership} accepts both points and lists of points. 
We also test membership for the irreducible components. Let us first decompose and then use the first witness set for membership testing. 
\begin{lstlisting}
julia> dec = decompose(W)
julia> W_irred = first(dec)
julia> membership(P, W_irred)
\end{lstlisting}
This returns a boolean vector of length $8$, where 4 entries are true (those corresponding to \code{W_irred}).
\end{example}
~

\begin{example}[Intersecting a witness set with a hypersurface]
Consider a witness set $W$ and a hypersurface $\mathcal H$. We can compute witness sets for the intersection of the variety represented by $W$ with $\mathcal H$. To illustrate this, we take the witness set \code{W} from the previous example and intersect it with the hypersurface defined by $g =x^2 + y^2 + z^2 - 1$. 
\begin{lstlisting}
julia> g = x^2 + y^2 + z^2 - 1
julia> intersect(W, g)
Witness set for dimension 0 of degree 16
\end{lstlisting}
The variety represented by $W$ and the sphere $\mathcal H=\{g=0\}$ intersect transversely, so their intersection is zero-dimensional. By intersecting witness sets we get $16 =2\cdot 8 = \mathrm{deg}(g) \cdot \mathrm{deg}(W)$ points. 
\end{example}

\bigskip
\section{$u$-regeneration}\label{sec:u_regen}

The implementation of \code{nid} is based on an algorithm called $u$-regeneration, which was introduced in \cite{u-regen}. Regeneration is the name for a class of \emph{equation-by-equation} solvers. Instead of solving a system of polynomials directly, we successively intersect witness sets with hypersurfaces. Compared to directly solving the full system this can significantly reduce the number of paths that need to be tracked, especially when a system of equations has components in multiple dimensions.

Here is how $u$-regeneration works. 

$u$-regeneration uses polynomial homotopy continuation. 
When we have a homotopy from a system of polynomials $F_1$ to another system $F_2$ we write this as 
$$ F_1 \longsquigarrow F_2.$$
For a detailed introduction to polynomial homotopy continuation see the textbooks \cite{BertiniBook, SommeseWamplerBook}. 

Suppose that we have a system of polynomials $F=(f_1,\ldots, f_k)$. We wish to compute a witness set for each equidimensional part of the variety $X(F)$. These witness sets do not need to represent irreducible components. One witness set per codimension is enough. The decomposition algorithm is not part of $u$-regeneration. We discuss this separate step in Section~\ref{sec:decomposition}.

First, we sample a \emph{random flag} 
$$L_1\subset L_2\subset \cdots \subset L_{n-1}\subset L_n= \mathbb C^n,\qquad L_j\in G_\mathrm{aff}(j,n)$$
by sampling affine linear equations so that $L_j = \{\ell_1(x)=\cdots=\ell_{n-j}(x)=0\}$. 

Then, we compute a witness set for each hypersurface 
$$\mathcal H_i = X(f_i),\quad i=1,\ldots,k,$$
by slicing $\mathcal H_i$ with $L_1$; i.e., we directly solve $f_i(x) = \ell_1(x)=\cdots=\ell_{n-1}(x)=0$. 

Having computed the hypersurface witness sets, the algorithm proceeds iteratively taking $k$ steps. At the $i$-th step we have witness sets for equidimensional components of the intersection 
$$X(F_i) = \mathcal H_1\cap \cdots \cap \mathcal H_i,$$
where the $i$-th system is
$$F_i = \begin{pmatrix}f_1\\\vdots \\ f_i\end{pmatrix}.$$
The equidimensional components of $X(F_i)$ can have codimension at most $i$. At the $i$-th step we therefore have $\min(i,n)$ witness sets $W_1,\ldots,W_{\min(i,n)}$, where $W_j=(F_i, L_{j}, P_j)$ describes the codimension~$j$ part (possibly empty).

To proceed to the next step, we must compute witness sets for the equidimensional components of $X(F_{i+1}) = \mathcal H_1\cap \cdots \cap \mathcal H_i\cap \mathcal H_{i+1}$. This works as follows. First, initialize 
$$W_j' = (F_{i+1},\ L_{j},\ \emptyset),\quad j=1\ldots,\min(i+1, n),$$
where $W_j'$ stores the points in codimension $j$. 

For $j=1,\ldots,\min(i,n)$ we then process the witness set $W_j = (F_i, L_{j}, P_j)$ for codimension $j$ as follows: 
\begin{enumerate}
\item Check which of the points $P_j$ are on $\mathcal H_{i+1}$ by membership testing (see Example \ref{ex:membership}). Call these points $P_j'$. Add $P_j'$ to $W_j'$.\end{enumerate}
The following steps only need to be computed when $j<n$.
\begin{enumerate}[resume]
\item Let $Q_j:=P_j\setminus P_j'$ be the remaining points, and denote $d=\mathrm{deg}(f_{i+1})$. The points in the product $Q_j \times \left\{e^{2\pi\sqrt{-1}\, m/d} \mid m = 0,\ldots,d-1\right\}$ are used as start points for the next homotopy, called the \emph{$u$-intersection step}. Define a new variable $u$ and sample a random number $c\in\mathbb C$. 
The $u$-intersection step computes the homotopy 
\begin{equation}\label{u_step}
\begin{pmatrix}
f_1\\[0.2em]
\vdots \\[0.2em]
f_i\\[0.2em]
\textcolor{red}{u^d - 1}\\[0.2em]
\ell_1\\[0.2em]
\vdots\\[0.2em]
\ell_{n-(j+1)} \\[0.2em]
\textcolor{red}{\ell_{n-j}}
\end{pmatrix}
\longsquigarrow
\begin{pmatrix}
f_1\\[0.2em]
\vdots \\[0.2em]
f_i\\[0.2em]
\textcolor{red}{f_{i+1}}\\[0.2em]
\ell_1\\[0.2em]
\vdots\\[0.2em]
\ell_{n-(j+1)} \\[0.2em]
\textcolor{red}{u-c}
\end{pmatrix}
\end{equation}
(the red colors indicate those entries that change from left to right).
The endpoints of this homotopy lie on $(X(F_{i+1}) \cap L_{j+1})\times \{u=c\}$. 
\item 
Let $Q_j' \times \{c\}$ be the endpoints of the homotopy \eqref{u_step}. It is shown in \cite[Proposition 2.1]{u-regen} that~$Q_j'$ includes the witness points for $X(F_{i+1})$ in codimension $j+1$. We add them to $W_{j+1}'$. 
\end{enumerate}
After this loop we replace $W_j$ by $W_j'$ for $j=1,\ldots,\min(i+1, n)$.

At the end of this procedure we have computed witness sets 
$W_1, \ldots, W_n$
for $\mathcal H_1\cap \cdots \cap \mathcal H_k$, such that $W_j$ contains the witness points in codimension $j$. 

To obtain an irreducible decomposition we then decompose these witness sets into irreducible components (see Section \ref{sec:decomposition}).

\bigskip
\section{Moving linear spaces}\label{sec:linear_spaces}

In the $u$-intersection step, in membership testing \cite{SommeseWamplerBook} and in the monodromy decomposition algorithm we move linear spaces. The implementation in \texttt{HomotopyContinuation.jl} is based on geodesics in the Grassmannian. This section explains the theoretical background. 

Recall from \eqref{def:G_aff} the definition of the affine Grassmannian $G_{\mathrm{aff}}(j,n)$. In our implementation we are dealing with so-called \emph{slice systems}. 

\begin{definition}[Slice systems]
Consider a system of polynomials $F(x)$ and an affine linear space $L\in G_{\mathrm{aff}}(j,n)$. We define the \emph{slice system} $(F,L)$. 
\begin{enumerate}\item The slice system consists of two specific families of polynomial systems:
\begin{itemize}
\item the \emph{intrinsic systems} $F(Av + b)$, where $L=\{Av+b\mid v\in\mathbb C^j\}$;
\item the \emph{extrinsic systems} $(F(x),\ Wx-w)$, where $L=\{x\in\mathbb C^n \mid Wx-w=0\}$. 
\end{itemize}
Since the representation of $L$ as the image or the kernel of a matrix is not unique, we get a family of systems for each bullet point. 
\item 
When we say that we solve the slice system $(F,L)$ it means that depending on the context we pick one of the intrinsic systems or one of the extrinsic systems in $(F,L)$ and compute its zeros. 
\end{enumerate}
\end{definition}

Homotopy paths between slice systems in \texttt{HomotopyContinuation.jl} use geodesics in the \emph{linear Grassmannian}
$$G(j,n) = \{\mathcal L \subset \mathbb C^n \mid \mathcal L \text{ is linear of dimension } j\}.$$
We also denote the \emph{real linear Grassmannian}
$$G_{\mathbb R}(j,n)= \{\mathcal L \subset \mathbb R^n \mid \mathcal L \text{ is linear of dimension } j\},$$
and the complex and real Stiefel manifolds 
$$\mathrm{St}(j,n) := \{A \in\mathbb C^{n\times j} \mid A^*A = I\},\qquad \mathrm{St}_{\mathbb R}(j,n) := \{A \in\mathbb R^{n\times j} \mid A^TA = I\}.$$
One has
$$G(j,n) \cong \mathrm{St}(j,n) / U(j),\qquad G_{\mathbb R}(j,n) \cong \mathrm{St}_{\mathbb R}(j,n) / O(j)$$
as homogeneous spaces. Here, $U(j)$ and $O(j)$ denote the unitary and orthogonal group, respectively. This induces a metric on both $G(j,n)$ and $G_{\mathbb R}(j,n)$ using the standard metrics on $\mathbb C^{n\times j}$ and $\mathbb R^{n\times j}$. 
Lim, Sze-Wai Wong and Ye \cite{geodesics}, based on Edelman, Arias and Smith \cite[Theorem 2.3]{geodesics0}, describe geodesics in the real Grassmannian $G_{\mathbb R}(j,n)$ with respect to this metric. We use their results to prove the following. 

\begin{theorem}[Geodesics in the complex Grassmannian]\label{thm_geodesics}
Consider linear spaces
$\mathcal L_0 = \mathrm{image}(A_0)$ and $\mathcal L_1 = \mathrm{image}(A_1)$
defined by Stiefel matrices $A_0,A_1\in \mathrm{St}(j,n)$. A length-minimizing geodesic $\mathcal L: [0,1]\to  G(j,n)$ (with respect to the metric induced by the standard metric on $\mathrm{St}(j,n)$) from $\mathcal L_0$ to $\mathcal L_1$ is given by 
$$\mathcal L(t) =\mathrm{image}(A(t)), \qquad A(t) = A_0 U \cos(t\Theta) U^* + Q \sin(t\Theta)U^* \in \mathrm{St}(j,n),$$
where $U,V\in U(j)$, $Q\in\mathrm{St}(j,n)$, and $\Theta = \mathrm{diag}(\theta_1,\ldots, \theta_j)$ are given by the SVDs
$$A_0^*\, A_1 = U \cos(\Theta) V^*,\quad \text{and}\quad (I - A_0A_0^*)A_1 = Q\, \sin(\Theta)\, V^*.$$
\end{theorem}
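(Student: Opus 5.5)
The plan is to reduce to the real case treated by Lim, Sze-Wai Wong and Ye \cite{geodesics}, which builds on \cite[Theorem 2.3]{geodesics0}. We identify $\mathbb C^n$ with $\mathbb R^{2n}$ via $z=x+\sqrt{-1}y \mapsto (x,y)$. Under this identification a complex matrix $M$ corresponds to the real matrix
\[
M_{\mathbb R}=\begin{pmatrix}\mathrm{Re}\,M & -\mathrm{Im}\,M\\ \mathrm{Im}\,M & \mathrm{Re}\,M\end{pmatrix}.
\]
This map sends $\mathrm{St}(j,n)$ into $\mathrm{St}_{\mathbb R}(2j,2n)$ and $G(j,n)$ into $G_{\mathbb R}(2j,2n)$, and it is an isometry for the Frobenius metrics up to the constant factor $\sqrt 2$. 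Rather than rely only on this embedding, however, I would verify the geodesic claim directly by redoing the Edelman--Arias--Smith computation over $\mathbb C$. That argument is homogeneous-space theoretic and uses only the Hermitian inner product $\mathrm{Re}\,\mathrm{tr}(X^*Y)$. I would use the embedding afterwards as a sanity check on minimality.

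\textbf{Step 1 (well-posedness of the SVDs).} Let $A_0^*A_1=U\cos(\Theta)V^*$ be an SVD, with singular values in $[0,1]$ because both matrices are Stiefel. Set $\theta_i=\arccos\sigma_i\in[0,\pi/2]$. We have
\[
A_1^*(I-A_0A_0^*)A_1 = I - V\cos^2(\Theta)V^* = V\sin^2(\Theta)V^*,
\]
so $(I-A_0A_0^*)A_1V$ has orthogonal columns with norms $\sin\theta_i$. We can therefore write $(I-A_0A_0^*)A_1=Q\sin(\Theta)V^*$ with $Q\in\mathrm{St}(j,n)$ and $A_0^*Q=0$. When some $\theta_i=0$ the corresponding columns of $Q$ are undetermined, and we complete them orthonormally inside $\mathcal L_0^\perp$. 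This is exactly the content of the second SVD in the statement.

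\textbf{Step 2 (the curve is a geodesic joining $\mathcal L_0$ and $\mathcal L_1$).} Because $A_0^*Q=0$, the matrix $[A_0U\ \ Q]$ has orthonormal columns. This gives $A(t)^*A(t)=U(\cos^2+\sin^2)U^*=I$. At $t=1$ we get $A(1)V U^* \cdot$ hmm; more precisely,
\[
A(1)UV^* = A_0U\cos\Theta V^* + Q\sin\Theta V^* = A_0A_0^*A_1+(I-A_0A_0^*)A_1 = A_1,
\]
so $\mathrm{image}\,A(1)=\mathcal L_1$. Clearly $\mathrm{image}\,A(0)=\mathcal L_0$. Write $H=QU^*\cdot$, or rather $A(t)=[A_0U\ Q]\,\bigl(\begin{smallmatrix}\cos t\Theta\\ \sin t\Theta\end{smallmatrix}\bigr)U^*$. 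Then $A(t)=\exp(t\,\Xi)A_0$, where
\[
\Xi = \tilde Q\Theta U^*A_0^* - A_0U\Theta\tilde Q^*, \qquad \tilde Q=Q,
\]
is skew-Hermitian. Since $\Xi$ maps $\mathcal L_0$ into $\mathcal L_0^\perp$, it is a horizontal direction. One-parameter subgroup orbits of horizontal elements in the symmetric space $U(n)/(U(j)\times U(n-j))$ are geodesics. Equivalently, $\dot A(0)=Q\Theta U^*$ is horizontal ($A_0^*\dot A=0$), and $\ddot A=-A\,\dot A^*\dot A$ is the complex analogue of the Edelman--Arias--Smith geodesic equation. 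The computation $\dot A(t)^*\dot A(t)=U\Theta^2U^*$ holds for all $t$, so the curve has constant speed.

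\textbf{Step 3 (minimality, the main obstacle).} The length of the curve is $\|\Theta\|_F=(\sum\theta_i^2)^{1/2}$, where the $\theta_i\in[0,\pi/2]$ are the principal angles between $\mathcal L_0$ and $\mathcal L_1$. I must show that no curve is shorter. Here I would invoke the real result of \cite{geodesics}, which states that the geodesic distance in $G_{\mathbb R}(k,N)$ is $\|\Theta\|_F$ of the principal angles.

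Under the realification, $\mathcal L_0$ and $\mathcal L_1$ have real principal angles equal to the complex $\theta_i$, each occurring twice. The embedding scales lengths by $\sqrt2$, so the real distance is $\sqrt2\|\Theta\|_F$. Any complex curve maps to a real curve that is at least that long, after the same $\sqrt2$ scaling. Hence the complex length is at least $\|\Theta\|_F$, which matches the curve constructed above.

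The delicate point is checking that realification doubles the principal angles exactly. This follows from applying the SVD of $A_0^*A_1$ to the real block form. A second delicate point is that the quotient metric is the restriction of the Frobenius metric to horizontal vectors, so that curve lengths in $G(j,n)$ can be computed on horizontal lifts in $\mathrm{St}(j,n)$.
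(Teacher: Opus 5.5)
Your argument is correct apart from one fixable slip (second paragraph), but it is organized differently from the paper's proof.

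The paper pushes the \emph{whole} statement through the realification $e$. Since $e(AB)=e(A)e(B)$ and $e(M^*)=e(M)^T$, the two complex SVDs become real SVDs with each $\theta_i$ doubled, so $e(A(t))$ is literally the Lim--Wong--Ye curve for $e(A_0),e(A_1)$. Then \cite[Corollary 4.3]{geodesics} gives minimality in $G_{\mathbb R}(2j,2n)$, and this pulls back to $G(j,n)$ because $e$ rescales all lengths by one constant. You are right that the constant is $\sqrt2$, which the paper's ``isometric'' glosses over; it is harmless.

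You instead check intrinsically over $\mathbb C$ that $A(t)=\exp(t\Xi)A_0$ is a constant-speed horizontal curve of length $\|\Theta\|_F$ joining $\mathcal L_0$ and $\mathcal L_1$. You use realification only to import the real distance formula, which gives $d_{G(j,n)}(\mathcal L_0,\mathcal L_1)\ge\|\Theta\|_F$. The paper's route is shorter; yours needs less from the real theory:
\begin{itemize}
\item As the remark after the theorem shows, Corollary 4.3 is phrased via $(A_0^*A_1)^{-1}$. Transporting it verbatim therefore tacitly excludes principal angles equal to $\pi/2$, whereas the distance formula holds for every pair.
\item Your Step 1 builds $Q$ from the first SVD. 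This makes explicit that the two SVDs in the statement must share $V$ and $\Theta$, which is not automatic if they are computed independently.
\end{itemize}
Step 2 is in fact redundant: once Step 3 gives the lower bound, a constant-speed curve of length $\|\Theta\|_F$ is minimizing, hence a geodesic.

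The slip is in Step 1. You require $Q\in\mathrm{St}(j,n)$ with $A_0^*Q=0$, completing the zero-angle columns inside $\mathcal L_0^\perp$. This is impossible when $2j>n$, since $\dim\mathcal L_0^\perp=n-j<j$; for the same reason $[A_0U\ \ Q]$ cannot then have orthonormal columns. The repair is easy:
\begin{itemize}
\item At least $2j-n$ angles vanish, and the corresponding columns of $Q$ only ever occur multiplied by $\sin(t\theta_i)=0$, both in $A(t)$ and in $\Xi$.
\item So you only need orthonormality and $A_0^*q_i=0$ for the columns with $\theta_i>0$; any Stiefel completion of the rest works.
\item Then $A_0^*Q\sin(t\Theta)=0$ and $\sin(t\Theta)Q^*Q\sin(t\Theta)=\sin^2(t\Theta)$, and $A(t)^*A(t)=I$, $A(t)=\exp(t\Xi)A_0$ and $\dot A^*\dot A=U\Theta^2U^*$ all go through unchanged.
\end{itemize}
Also record $A(t)^*\dot A(t)=0$ for all $t$, not only at $t=0$, so that $\int_0^1\|\dot A\|_F\,dt$ really is the length in $G(j,n)$. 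Alternatively, note that this integral is always an upper bound for that length, which together with Step 3 already suffices.
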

\begin{remark}
\cite[Corollary 4.3]{geodesics} uses the SVD $(I - A_0A_0^*)A_1(A_0^*A_1)^{-1} = Q\, \tan(\Theta)\, U^*$ to compactly define all the data needed to establish the geodesic. However, computing $Q,U$ and $\Theta$ using this formulation is numerically ill-posed when a principal angle is (close to) $\frac{\pi}{2}$. This is the case when~$\mathcal L_0$ contains a vector that is (almost) perpendicular to $\mathcal L_1$. This motivates computing the SVDs of $A_0^*A_1$ and $(I - A_0A_0^*)A_1$ instead.
\end{remark}

We prove the theorem at the end of this section. Let us first discuss what it implies for moving slice systems.

Lim, Sze-Wai Wong and Ye \cite{geodesics} use the formula in Theorem \ref{thm_geodesics} to describe geodesics in the real affine Grassmannian. They embed an affine linear space $L\subset \mathbb R^n$ into projective space~$\mathbb P^{n}$ through the embedding $\iota: \mathbb R^n\hookrightarrow\mathbb P^n,\, \iota(x)= [x:1]$. Affine linear spaces are mapped to linear spaces under this embedding. They define the metric on the Grassmannian of real affine spaces $L\subset \mathbb R^n$ of dimension~$j$ to be the pullback metric via this map. The geodesic from an affine space~$L_0$ to another $L_1$ is then given as $L(t):=\iota^{-1}(\mathcal L(t))$, where $\mathcal L(t)\in G_{\mathbb R}(j+1,n+1)$ is the geodesic from $\mathcal L_0 = \iota(L_0)$ to~$\mathcal L_1 = \iota(L_1)$ from Theorem~\ref{thm_geodesics}. They prove that for almost all pairs of affine spaces, the path $\mathcal L(t)$ is indeed in the image of $\iota$. 

We found that the approach by Lim, Sze-Wai Wong and Ye \cite{geodesics} is numerically less well-conditioned compared to taking the approach that we discuss in the next subsection. This observation is supported by the plots in Figure \ref{cond_fig1}. We see that in the example of this figure the Intrinsic Subspace Homotopy has a condition number that is about two orders of magnitude smaller than the condition number of the homotopy that takes the path in the Grassmannian proposed by \cite{geodesics}.

\begin{figure}
\centering
  \begin{subfigure}{0.45\textwidth}
  \centering
  \includegraphics[height=5cm]{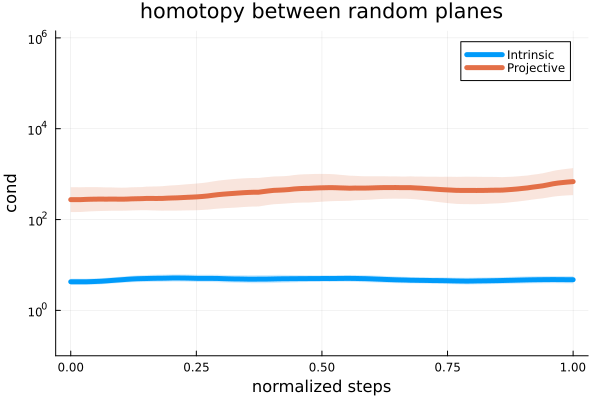}
  \caption{\label{F:hom1}}
  \end{subfigure}
\hfill
  \begin{subfigure}{0.45\textwidth}
  \centering
  \includegraphics[height=5cm]{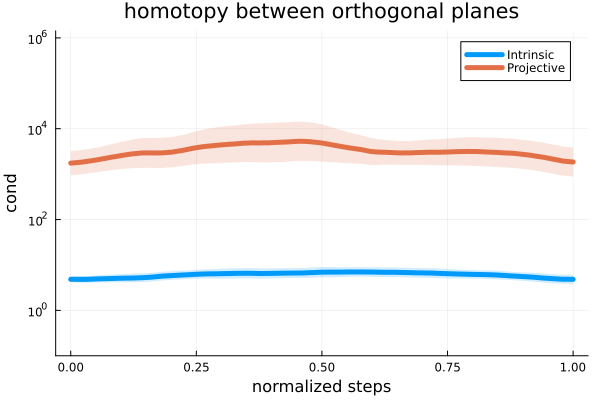}
  \caption{\label{F:hom2}}
  \end{subfigure}
\caption{\label{cond_fig1}The plots show the average condition numbers of the Jacobian for the Intrinsic Subspace Homotopy (blue) and the homotopy $F(A(t)v+b(t))$, where $A(t)v+b(t)$ follows the path proposed by Lim, Sze-Wai Wong and Ye~\cite{geodesics} (red, labelled ``projective''). Both homotopies move linear spaces inside a witness set $(F,L,P)$, where $F(x)=(q_1(x),\ q_2(x))$ consists of two quartic polynomials in $10$ variables so that $X(F)$ is irreducible of dimension $8$, $L$ is a plane in $\mathbb C^{10}$, and $W$ consists of $\mathrm{deg}(q_1)\cdot \mathrm{deg}(q_2) = 4\cdot 4= 16$ points. The~homotopies each start at the same witness points. The ticks on the $x$-axis correspond to the steps of the homotopies. The $y$-axis uses a logarithmic scale. The left plot shows the average condition numbers across 50 trials for moving between two random planes. The right plot shows the average condition numbers for moving between two random orthogonal planes. The shaded bands indicate 95\% confidence intervals across trials.}
\end{figure}

\subsection{Homotopies for slice systems}\label{sec_hom_slice}

Suppose that we have two slice systems $(F,L_0)$ and $(F, L_1)$ and we want to move $L_0$ into $L_1$ by a homotopy. Depending on the codimension of $L$ \texttt{HomotopyContinuation.jl} automatically chooses an \emph{Intrinsic Subspace Homotopy} when $\dim(L) \leq \mathrm{codim}(L)$, and an \emph{Extrinsic Subspace Homotopy} otherwise. We introduce these homotopies next.

Let us first discuss the intrinsic case. 
\begin{definition}[Intrinsic Subspace Homotopy]\label{def:ISH}
Suppose $(F,L_0)$ and $(F, L_1)$ are slice systems with
$$L_0 = \mathrm{image}(A_0) + b_0,\qquad L_1 = \mathrm{image}(A_1) + b_1,$$
where $A_0,A_1\in\mathrm{St}(j,n)$ are complex Stiefel matrices. The \emph{Intrinsic Subspace Homotopy} is
$$H(v,t) = F(A(t)v+b(t)),$$
where $b(t) = (1-t)b_0 + tb_1$ and $A(t)$ is as in Theorem \ref{thm_geodesics}.
\end{definition}
In Definition \ref{def:ISH} $A(t)$ is always a Stiefel matrix. 
The Jacobian in the Intrinsic Subspace Homotopy is 
$$\frac{\mathrm d }{\mathrm dv}\ H(v,t)= JF(A(t)v + b(t))\, A(t).$$ Choosing $A(t)$ as a Stiefel matrix avoids conditioning effects from a non-orthonormal representation, since then only the angles between the rows of $JF(A(t)v + b(t))$ and the columns of $A(t)$ contribute to conditioning, but not the internal representation of $\mathrm{image}(A(t))$.

Next, we discuss the extrinsic case. For this we first observe that the curve of matrices $A(t)$ in Theorem \ref{thm_geodesics} satisfies 
$A(0) =A_0$ and $A(1) = A_1VU^*,$ which has the same image as $A_1$.
\begin{definition}[Extrinsic Subspace Homotopy]\label{def:ESH}
Suppose that $(F,L_0)$ and $(F, L_1)$ are slice systems and that
$$L_0 = \{x\in\mathbb C^n \mid W_0x - w_0=0\},\qquad L_1 = \{x\in\mathbb C^n \mid W_1x - w_1=0\},$$
where $A_0:=W_0^*\in\mathrm{St}(n-j,n)$ and $A_1:=W_1^*\in\mathrm{St}(n-j,n)$ are complex Stiefel matrices. The \emph{Extrinsic Subspace Homotopy} is
$$H(x,t) = \begin{pmatrix} F(x)\\ A(t)^*x - w(t)\end{pmatrix},$$
where $w(t) = (1-t)w_0 + t\, (UV^*w_1)$ and $A(t),U,V$ are as in Theorem \ref{thm_geodesics}.
\end{definition}

The Jacobian in the Extrinsic Subspace Homotopy is $\begin{pmatrix} JF(x)\\ A(t)^*\end{pmatrix}$. As before we choose $A(t)$ to be a Stiefel matrix to avoid conditioning effects from a non-orthonormal representation.

\subsection{Proof of Theorem \ref{thm_geodesics}}
\begin{proof}
For any $s,r\geq 1$ consider the embedding $e:\mathbb C^{s\times r} \hookrightarrow \mathbb R^{(2s) \times (2r)}$ given by 
$$e(A + \sqrt{-1}\, B) = 
\begin{pmatrix} 
a_{1,1} & - b_{1,1} & \cdots & a_{1,r} & -b_{1,r} \\
b_{1,1} & a_{1,1} & & b_{1,r} & a_{1,r} \\
\vdots &&\ddots&&\vdots \\
a_{s,1} & -b_{s,1} & & a_{s,r} & -b_{s,r} \\
b_{s,1} & a_{s,1} & \cdots & b_{s,r} & a_{s,r}\end{pmatrix}, \quad A=(a_{i,j}),\ B=(b_{i,j})\in\mathbb R^{s\times r}$$
(we use the same symbol $e$ for every such embedding, although, strictly speaking we have an embedding $e=e_{s,r}$ for every $s,r$). 

We also denote $e(\mathcal L) := \mathrm{image}(e(A))$ for a complex subspace $\mathcal L=\mathrm{image}(A)\in G(j,n)$.
This sends $\mathcal L$ to the real subspace $e(\mathcal L)$ obtained by identifying $\mathbb C \cong \mathbb R^2$. One checks that this defines an isometric embedding 
$$e:G(j,n)\hookrightarrow G_{\mathbb R}(2j,2n).$$

For a matrix product $AB$ we have $e(AB) = e(A)e(B)$. Consequently, we have that $e(\mathcal L(t))$ is the image of 
$$e(A(t)) = e(A_0)\, e(U)\, e(\cos(t\Theta))\, e(U)^* + e(Q)\, e(\sin(t\Theta))\, e(U)^*,$$
where $e(Q), e(U), e(V)$ and $\Theta$ are obtained from the SVDs
\begin{align*} 
e(A_0)^*\, e(A_1) &= e(U)\,
\begin{pmatrix} 
\cos(\theta_1) & 0 & & & \\
0 & \cos(\theta_1) & & & \\
 &&\ddots&& \\
 & & & \cos(\theta_j) &0\\
 & & & 0& \cos(\theta_j)
\end{pmatrix}\,
e(V)^*,\\[0.9em]
\text{and}\quad 
(I - e(A_0)e(A_0)^*)\, e(A_1) &= e(Q)\, 
\begin{pmatrix} 
\sin(\theta_1) & 0 & & & \\
0 & \sin(\theta_1) & & & \\
 &&\ddots&& \\
 & & & \sin(\theta_j) &0\\
 & & & 0& \sin(\theta_j)
\end{pmatrix}
\, e(V)^*.
\end{align*} 
Indeed, the right-hand sides are SVDs, because $e(U)$ and $e(V)$ are orthogonal $(2j)\times (2j)$-matrices, and $e(Q)$ is a real Stiefel $(2n)\times (2j)$-matrix: $e(Q)^*e(Q) = e(Q^*Q) = e(I) = I$, and similar for $U$ and~$V$. By \cite[Corollary 4.3]{geodesics}, the path defined by $e(A(t))$ is a length-minimizing geodesic in
$G_{\mathbb R}(2j,2n)$ between its endpoints. Since $e$ preserves lengths, the corresponding
path $A(t)$ is length-minimizing in $G(j,n)$, hence is a geodesic.
\end{proof}
In particular, the proof implies the following, since every geodesic is locally length-minimizing.
\begin{corollary}
The embedding $e:G(j,n)\hookrightarrow G_{\mathbb R}(2j,2n)$ is totally geodesic. It maps length-minimizing geodesics in $G(j,n)$ to length-minimizing geodesics in $G_{\mathbb R}(2j,2n)$.
\end{corollary}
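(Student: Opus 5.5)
The corollary has two claims: length-minimizing geodesics map to length-minimizing geodesics, and $e$ is totally geodesic. I would extract the first from the proof of Theorem~\ref{thm_geodesics} and derive the second from it by a local argument.

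\emph{Step 1: length-minimizing geodesics.} Let $\gamma:[0,1]\to G(j,n)$ be a length-minimizing geodesic between $\mathcal L_0=\mathrm{image}(A_0)$ and $\mathcal L_1=\mathrm{image}(A_1)$.

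First I would show that $\gamma$ coincides, up to reparametrization, with the path $\mathcal L(t)$ of Theorem~\ref{thm_geodesics}. The proof of the theorem shows that $e(\mathcal L(t))$ is the path of \cite[Corollary 4.3]{geodesics}. That path is length-minimizing in $G_{\mathbb R}(2j,2n)$, so
\[
d_{\mathbb R}(e\mathcal L_0,e\mathcal L_1)=\mathrm{length}(e\circ\mathcal L)=\mathrm{length}(\mathcal L).
\]
Since $\gamma$ is also length-minimizing in $G(j,n)$, we have $\mathrm{length}(\gamma)\le \mathrm{length}(\mathcal L)$. Because $e$ is an isometric embedding it preserves lengths of curves, so $\mathrm{length}(e\circ\gamma)=\mathrm{length}(\gamma)\le d_{\mathbb R}(e\mathcal L_0,e\mathcal L_1)$. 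A curve cannot be shorter than the distance between its endpoints, so equality holds and $e\circ\gamma$ is length-minimizing in $G_{\mathbb R}(2j,2n)$.

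\emph{Step 2: total geodesicity.} Now let $\gamma$ be an arbitrary geodesic of $G(j,n)$. Geodesics are locally length-minimizing, so every $t_0$ has a neighbourhood $[t_0-\epsilon,t_0+\epsilon]$ on which $\gamma$ is length-minimizing. By Step~1, $e\circ\gamma$ is length-minimizing on each such interval. A length-minimizing curve parametrized proportionally to arc length is a geodesic, and $e$ preserves speed, so $e\circ\gamma$ is a geodesic on each interval. Being a geodesic is a local property, so $e\circ\gamma$ is a geodesic of $G_{\mathbb R}(2j,2n)$. This is exactly the statement that $e(G(j,n))$ is totally geodesic.

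\emph{Main obstacle.} The delicate point is the claim that $e$ is isometric. It is needed in the strong sense that it preserves the lengths of all curves, not merely that it maps $\mathcal L(t)$ to the real geodesic. I would justify it directly. The quotient metric on $\mathrm{St}(j,n)/U(j)$ at $\mathrm{image}(A)$ is given on horizontal vectors $\Delta$ with $A^*\Delta=0$ by $\mathrm{Re}\,\mathrm{tr}(\Delta^*\Delta)$. The map $e$ satisfies $e(A)^T e(\Delta)=e(A^*\Delta)=0$, so it sends horizontal vectors to horizontal vectors. It also satisfies
\[
\mathrm{tr}\big(e(\Delta)^T e(\Delta)\big)=2\,\mathrm{Re}\,\mathrm{tr}(\Delta^*\Delta).
\]
Hence $e$ scales the metric by the constant factor $2$, possibly after normalizing the metric on the real side. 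A constant scaling does not affect which curves are minimizing or geodesic, so the argument above goes through.
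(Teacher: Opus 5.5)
Your proposal is correct and follows the paper's own reasoning. The proof of Theorem~\ref{thm_geodesics} transfers minimality through $e$ via \cite[Corollary 4.3]{geodesics}, and total geodesicity then follows because geodesics are locally length-minimizing; you spell this out in more detail. Your comparison $\mathrm{length}(e\circ\gamma)=\mathrm{length}(\gamma)\le\mathrm{length}(\mathcal L)=d_{\mathbb R}(e\mathcal L_0,e\mathcal L_1)$ (after normalizing the metric) makes the announced identification of $\gamma$ with $\mathcal L(t)$ unnecessary. That identification need not hold for a fixed choice of SVD data when minimizers are non-unique, e.g.\ at a principal angle $\frac{\pi}{2}$. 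Your remark that $e$ scales squared norms by $2$, so it is a homothety rather than a literal isometry, is a correct refinement of the paper's ``isometric'' and changes nothing in the argument.
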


\bigskip
\section{Implementation details}\label{sec:implementation}

The implementation only considers reduced witness sets and discards points with higher multiplicity, since $u$-regeneration takes witness points as starting points for homotopies; see Remark~\ref{rem:NPD}. We have the following correctness result for our implementation. 

\begin{theorem}\label{thm_correctness}
For $1\leq i\leq k$, let $F_i=(f_1,\ldots,f_i)$. We say that a component $Y$ of $X(F_i)$ \emph{contributes} to a component~$Z$ of $X(F)$ if $Z$ is obtained from $Y$ by successively intersecting with the remaining hypersurfaces $\mathcal H_{i+1}, \ldots, \mathcal H_k$, where $\mathcal H_s = X(f_{s})$. 

Assume that for every $i$ all components of $X(F_i)$ which contribute to 
components of $X(F)$ that are reduced with respect to $F$ are themselves reduced with respect to $F_i$. Then $u$-regeneration computes witness sets $W_1,\ldots,W_{\min(k,n)}$ whose witness points are precisely the witness points of the reduced equidimensional parts of $X(F)$.
\end{theorem}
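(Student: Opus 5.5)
I would prove Theorem~\ref{thm_correctness} by induction on the number $i$ of processed equations. The inductive claim is the following.
\begin{quote}
After step $i$, for each $j\le \min(i,n)$ the witness point set $P_j$ of $W_j$ contains $Z\cap L_j$ for every codimension-$j$ component $Z$ of $X(F_i)$ that contributes to a reduced component of $X(F)$. Moreover, every point of $P_j$ is a reduced point of $X(F_i)\cap L_j$ lying on a codimension-$j$ component of $X(F_i)$.
\end{quote}
The base case $i=1$ is the direct solution of $f_1=\ell_1=\cdots=\ell_{n-1}=0$. For a generic flag, the hypersurface $\mathcal H_1$ is cut by $L_{n-1}$ in finitely many points. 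The reduced points among these are exactly the witness points of the reduced part of $\mathcal H_1$, and the implementation keeps exactly these points.

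For the inductive step, fix a codimension-$j$ component $Y$ of $X(F_i)$ that contributes to a reduced component of $X(F)$. By the hypothesis of the theorem, $Y$ is reduced with respect to $F_i$, so its witness points $Y\cap L_j$ are in $P_j$ by induction. There are two cases.
\begin{itemize}
\item If $Y\subset\mathcal H_{i+1}$, then $Y$ is itself a codimension-$j$ component of $X(F_{i+1})$. Its witness points pass the membership test in step~(1) and are stored in $W_j'$.
\item If $Y\not\subset\mathcal H_{i+1}$, then for a generic flag no point of $Y\cap L_j$ lies on $\mathcal H_{i+1}$, so all of them belong to $Q_j$. The components of $Y\cap\mathcal H_{i+1}$ have codimension $j+1$. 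I would invoke \cite[Proposition 2.1]{u-regen}: the homotopy \eqref{u_step} started at $Q_j\times\{d\text{-th roots of unity}\}$ reaches every point of $(Y\cap\mathcal H_{i+1})\cap L_{j+1}$, with $u=c$.
\end{itemize}
Two further points are needed to control which endpoints are kept.
\begin{enumerate}
\item A component $Z$ of $Y\cap\mathcal H_{i+1}$ that contributes to a reduced component is, by hypothesis, reduced for $F_{i+1}$. The endpoints on $Z$ are therefore nonsingular, and the implementation keeps them.
\item Endpoints in $Q_j'$ that lie on higher-dimensional components of $X(F_{i+1})$, or that are singular, must be removed. Singular endpoints are discarded by the rank test on $JF_{i+1}$. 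An endpoint lying on a component of codimension at most $j$ is recognised by membership testing against the already computed $W_1',\ldots,W_j'$ and discarded.
\end{enumerate}
Points of codimension-$(j+1)$ components that do not contribute may survive at intermediate steps. This is harmless, because the invariant only requires inclusion for contributing components.

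At $i=k$, every component of $X(F)$ contributes to itself. The reduced components are therefore all captured, and the non-reduced ones are removed by the final Jacobian rank filter. Together with the exclusion argument above, this gives that $P_j$ equals precisely the set of witness points of the reduced codimension-$j$ part of $X(F)$.

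The main obstacle is the exclusion direction in point~2 above: showing that the membership test in step~(1) of the algorithm, together with the singularity filter, removes exactly the junk and never a genuine witness point. A genuine reduced point of a codimension-$(j+1)$ component lies, for a generic flag, on no higher-dimensional component. Still, this has to be checked against the possibility that a reduced codimension-$(j+1)$ component is embedded in a non-reduced, and hence discarded, higher-dimensional component. The contribution hypothesis is what I would use to rule this case out: such an embedded component is not itself a component of $X(F_{i+1})$.
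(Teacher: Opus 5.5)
Your proposal is correct and follows the same route as the paper's proof. Both argue by induction over the equations and split into the cases $Y\subseteq\mathcal H_{i+1}$ (points kept by the membership test) and $Y\not\subseteq\mathcal H_{i+1}$ (handled by \cite[Proposition 2.1]{u-regen}); the hypothesis ensures every contributing component is reduced so its witness points are available, and the reducedness filter gives the converse, which your explicit invariant spells out more carefully than the paper does.

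The exclusion step you flag as the main obstacle is settled by the rank test alone. At a point $x$ of a component of codimension $c\le j$, the kernel of $JF_{i+1}(x)$ contains that component's tangent space, so $\mathrm{rank}\, JF_{i+1}(x)\le c<j+1$. This also removes endpoints on higher-dimensional components that were never computed, which membership testing against $W_1',\ldots,W_j'$ would miss. Genuine witness points are never discarded, since for a generic flag they lie on no other component; the ``embedded component'' worry is vacuous because components are maximal by definition.
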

\begin{proof}
We prove the statement by induction on $k$. For $k=1$, we are only dealing with a single polynomial. The algorithm computes witness sets for the hypersurface $\mathcal H_1 = X(F_1)$ by solving
$f_1(x)=\ell_1(x)=\cdots=\ell_{n-1}(x)=0.$ We only keep the regular solutions of that system of equations. Therefore the solutions obtained in this first step are exactly the witness points on the components that are reduced with respect to $F_1$. We do not need the extra assumption for the induction start.

For the induction step we assume that $u$-regeneration has computed the witness
points of all reduced components of $X(F_{k-1})$ that contribute to reduced components of $X(F_k)$. We have to show that the same holds for $F=F_k$.

Let $Y$ be an irreducible component of $X(F_{k-1})$ of codimension $j$ that is reduced with respect to $F_{k-1}$, and let $P=Y\cap L_j$ be its witness point set. The membership test detects the points of $P$ lying on $\mathcal H_k$. These points are added to the codimension $j$ witness set. The remaining points are used as start points for the $u$-intersection step \eqref{u_step}. By \cite[Proposition 2.1]{u-regen}, the endpoints of this homotopy include the witness points of $Y\cap \mathcal H_k$ in codimension $j+1$. These points are added to the codimension $j+1$ witness set.\enlargethispage{\baselineskip}

Suppose that $Y\subseteq X(F_{k-1})$ contributes to $Z\subseteq X(F)$, and that $Z$ is reduced with respect to~$F$. By assumption, $Y$ is reduced with respect to $F_{k-1}$, so its witness points
are available by the induction hypothesis. Thus, we obtain $Z$ by $u$-regeneration in one of the two ways described above.
Conversely, if we obtain $Z$ by $u$-regeneration from $Y$, then $Y$ must have been reduced to begin with, and $Z$ is kept only if it is reduced with
respect to $F$.
\end{proof}

The assumption of Theorem \ref{thm_correctness} is not always satisfied. Consider the following example.

\begin{example}
Consider $F=(f_1,f_2)$ with $f_1(x,y) = (x-1)^2$ and $f_2(x,y) = x-1$. The zero set $X(F)$ is the line $x=1$, and it is reduced with respect to $F$.
The line $\mathcal H_1 = X(f_1)$ is not reduced with respect to~$f_1$, but it contributes to $X(F)$.
\end{example}

This issue with having intermediate non-reduced witness sets can be avoided by randomization. The following result is implicitly proved in \cite{HAUENSTEIN20111240}.
\begin{proposition}
Let $M\in\mathbb C^{k\times k}$ be a matrix with i.i.d.\ complex Gaussian entries, and $G:=M\cdot F$. With probability one, $G$ satisfies the assumption of Theorem \ref{thm_correctness}.
\end{proposition}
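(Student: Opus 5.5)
The plan is to show that the randomized system $G=M\cdot F$ has the same zero set as $F$ and the same reduced components. We then show that for a generic $M$ every intermediate system $G_i=(g_1,\ldots,g_i)$ is reduced along every component that contributes to a reduced component of $X(G)$. The first part is immediate. A Gaussian matrix $M$ is invertible with probability one, so $X(G)=X(F)$. Moreover $JG(x)=M\,JF(x)$, so $\mathrm{rank}\,JG(x)=\mathrm{rank}\,JF(x)$ at every point. Hence a component $Z$ of $X(F)$ is reduced with respect to $F$ if and only if it is reduced with respect to $G$. It therefore suffices to verify the hypothesis of Theorem \ref{thm_correctness} for $G$.

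The heart of the argument is a Bertini-type statement in the spirit of \cite{HAUENSTEIN20111240}. Fix a reduced component $Z\subseteq X(F)$ of codimension $c$, and a generic point $z\in Z$, so that $\mathrm{rank}\,JF(z)=c$. The row space $R_z$ of $JF(z)$ is then a $c$-dimensional subspace of $(\mathbb C^n)^*$. The first $i$ rows of $JG(z)$ are $M_{[i]}JF(z)$, where $M_{[i]}$ consists of the first $i$ rows of $M$. For generic $M$ these span a subspace of $R_z$ of dimension exactly $\min(i,c)$. Now suppose $Y\subseteq X(G_i)$ contributes to $Z$, and set $j=\mathrm{codim}\,Y$. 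Then $Z\subseteq Y$ and $j\le \min(i,c)$, because each of the $i$ equations cuts dimension by at most one. To show that $Y$ is reduced, I would show that $\mathrm{rank}\,JG_i(y)=j$ at a generic point $y\in Y$.

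My first attempt is semicontinuity combined with a dimension count. At the point $z\in Z\subseteq Y$ we have $\mathrm{rank}\,JG_i(z)=\min(i,c)\ge j$. The rank is lower semicontinuous, so it is at least $j$ on a neighborhood of $z$ in $Y$, and hence at least $j$ at a generic point of $Y$. For the reverse inequality I would use the fact that $\mathrm{rank}\,JG_i\le \mathrm{codim}$ along any component on which $G_i$ vanishes: the tangent space at a smooth point is contained in $\ker JG_i$. Together these give $\mathrm{rank}\,JG_i(y)=j$ at generic $y\in Y$, so $Y$ is reduced.

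The main obstacle is uniformity in $M$. The "generic" condition must hold simultaneously for all components $Z$ and all chains of intermediate components, and those intermediate components themselves depend on $M$. I plan to handle this with a finiteness argument. For each of the finitely many reduced components $Z$ of $X(F)$, pick one smooth point $z_Z$ with $\mathrm{rank}\,JF(z_Z)=\mathrm{codim}\,Z$. Require that $M$ avoids the proper algebraic subset on which some $M_{[i]}JF(z_Z)$ has rank less than $\min(i,\mathrm{codim}\,Z)$. This is finitely many conditions, each of which is a nonvanishing minor, and they are independent of $Y$. A Gaussian $M$ avoids a proper algebraic subset with probability one. The one remaining point is whether $z_Z$ lies on the relevant $Y$; this holds for every $z\in Z$ because $Z\subseteq Y$, so choosing the point on $Z$ first makes the argument independent of $M$.
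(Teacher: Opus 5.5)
Your argument is correct. It is also more self-contained than what the paper offers: the paper gives no proof of its own and only says that the statement is implicitly proved in \cite{HAUENSTEIN20111240}, where such facts come from the analysis of randomized systems in the regenerative cascade (Bertini-type results for generic linear combinations of equations). You prove it directly, in three steps. First, invertibility of $M$ makes the set of reduced components $Z$ independent of $M$. Second, one fixed point $z_Z$ with $\mathrm{rank}\,JF(z_Z)=\mathrm{codim}\,Z$, together with finitely many nonvanishing-minor conditions on $M$, gives $\mathrm{rank}\,JG_i(z_Z)=\min(i,\mathrm{codim}\,Z)$. Third, semicontinuity on the irreducible $Y\ni z_Z$ combined with the tangent-space bound $\mathrm{rank}\,JG_i\le \mathrm{codim}\,Y$ at smooth points of $Y$ forces reducedness.

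Phrase the semicontinuity step in Zariski terms rather than via a ``neighborhood'' of $z$. The locus $\{y\in Y:\mathrm{rank}\,JG_i(y)\ge j\}$ is the complement in $Y$ of the common zeros of the $j\times j$ minors. It is nonempty because it contains $z_Z$, so it is dense in $Y$.

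Your route buys an explicit exceptional set for $M$ that does not depend on the intermediate components, and a slightly stronger conclusion. You only use $Z\subseteq Y$, not the full ``contributes'' chain. Moreover, the two inequalities on the generic rank along $Y$ pin down $\mathrm{codim}\,Y=\min(i,\mathrm{codim}\,Z)$. So every component of $X(G_i)$ containing a reduced component $Z$ of $X(F)$ is reduced and of the expected codimension, and for $i\ge\mathrm{codim}\,Z$ it is $Z$ itself. The paper's citation buys brevity and the link to the general randomization framework underlying regenerative cascades.
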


\texttt{HomotopyContinuation.jl} provides randomization as an option. While at first sight \emph{always} randomizing seems to make sense, it has two practical drawbacks. First, randomizing homogeneous systems may destroy the homogeneous structure. Second, randomization causes all degrees of $G$ to be equal to the maximal degree of the polynomials in $F$. Running $u$-regeneration on $G$ will therefore produce intermediate witness sets that can have significantly larger degree compared to the intermediate witness sets when running $u$-regeneration on $F$. Consequently, decomposing $X(G)$ can be much slower than decomposing $X(F)$. This is why the default mode in \texttt{HomotopyContinuation.jl} does not randomize (by default the degrees will be sorted in increasing order to keep the degrees of the intermediate witness sets small).

\begin{example}
We compute the example above.
\begin{lstlisting}
julia> @var x y
julia> f = System([(x - 1)^2; x - 1], variables = [x, y])
julia> regeneration(f; sorted = :randomized)
Witness set for dimension 1 of degree 1
\end{lstlisting}
The keyword \code{sorted} can be either \code{true} (default, sorts polynomials by degrees in increasing order), \code{false} (runs $u$-regeneration on the original $F$), or \code{:randomized} (randomizes $F$). \code{nid} also accepts the keyword \code{sorted}.
\end{example}

\subsection{$u$-regeneration}\label{implementation_u_regen}

The key part of $u$-regeneration is the $u$-intersection step \eqref{u_step}.
In the implementation we do not use a direct homotopy. Instead, we sample a random linear equation $a(x,u)=0$ and take an indirect path through $a$. Denote $d=\mathrm{deg}(f_{i+1})$ and 
$$F_i'(x,u):= \begin{pmatrix}
f_1\\[0.2em]
\vdots \\[0.2em]
f_i\\[0.2em]
u^d - 1
\end{pmatrix}, \qquad K_j:= L_j \cap \{a(x,u)=0\},\qquad L_j':= L_j \cap \{u-c=0\}.$$
Then, instead of taking the homotopy that goes directly $(F_i', L_j)\longsquigarrow (F_{i+1}, L_j')$ in \eqref{u_step}, we take the following three-step homotopy between slice systems:
$$(F_i', L_j) \longsquigarrow (F_i', K_j) \longsquigarrow (F_{i+1}, K_j) \longsquigarrow (F_{i+1}, L_j').$$
In particular, 
\begin{enumerate}
  \item $H_1: (F_i', L_j) \longsquigarrow (F_i', K_j)$ moves linear spaces
  \item $H_2: (F_i', K_j) \longsquigarrow (F_{i+1}, K_j)$ only moves $u^d-1$ to $f_{i+1}$ but keeps the rest fixed, and
  \item $H_3: (F_{i+1}, K_j) \longsquigarrow (F_{i+1}, L_j')$ again moves linear spaces. 
\end{enumerate}
This disentangles the (numerical) complexity of computing the $u$-intersection step. 

The first and the third homotopy move linear spaces, for which we use homotopies for slice systems (see Section \ref{sec_hom_slice}).

The crucial step is the second homotopy $H_2$. The start points of this homotopy are given by the Cartesian product of the witness points for $F_i$ in codimension $j$ with the roots of unity. These witness points can be far from the origin, which can make path tracking numerically challenging. To cope with this we have implemented a safety mechanism: run the first homotopy $H_1$ and check if all the endpoints of this homotopy can be used as start points for the second homotopy~$H_2$. If yes, continue. If no, resample $a$. We found that this step-by-step approach significantly increases the numerical stability of the computation.

\subsection{Decomposing witness sets}\label{sec:decomposition}

After $u$-regeneration we have a collection of witness sets $W_1,\ldots, W_n$, where $W_j = (F, L_j, P_j)$ represents the codimension $j$ part $Z_j$ of $X(F)$ that is reduced with respect to $F$. 

Suppose the decomposition of $Z_j$ into irreducible components is $$Z_j = X_j^{(1)}\cup \cdots \cup X_{j}^{(r_j)}.$$
To compute them we decompose $P_j$ as
$$P_j^{(i)} = X_j^{(i)}\cap L_j,\qquad i = 1,\ldots, r_j,$$
using the \emph{monodromy decomposition algorithm} as described in \cite{SommeseWamplerBook}: moving $L_j$ in a loop (a so-called \emph{monodromy loop}) induces a permutation on $P_j$. By \cite[Lemma A.12.1]{SommeseWamplerBook}, the orbits of this action under all such loops are precisely $P_j^{(i)}$ for $i=1,\ldots,r_j$. To understand when to stop we rely on the \emph{trace test} \cite{TraceTest}. This test takes a partial orbit and computes its \emph{trace}. Theoretically, the trace is zero if and only if the orbit is complete. Since we are dealing with numerical computation ``is zero'' is replaced by ``is smaller than some tolerance value'' (which by default is $10^{-6}$).
Thus, we proceed as follows.
\begin{enumerate}
\item Fix a codimension $j\in\{1,\ldots, n\}$.
\item Run monodromy loops for $L_j$ and collect the orbits within $P_j$. 
\item For every orbit run the \emph{trace test}. If the trace test succeeds, mark the orbit as completed. If not, run more monodromy loops.
\item End when all trace tests succeed.
\end{enumerate}
We obtain the witness sets $(F, L_j, P_j^{(i)})$ for $X_j^{(i)}$, $1\leq i\leq r_j$, for every codimension $j$. These represent the irreducible components of $X(F)$ that are reduced with respect to $F$.

\subsection{Projective varieties}

\texttt{HomotopyContinuation.jl} automatically detects if a system of polynomials is homogeneous. In this case, the system will automatically be sliced with linear spaces (instead of affine linear spaces) to keep the homogeneous structure. In addition, all reported dimensions are projective. Here is an example. 

\begin{lstlisting}
julia> @var x[1:4]
julia> a = x[1]^2 + x[2]^2 + x[3]^2 + x[4]^2
julia> b = x[1]^3 + x[2]^3 + 2x[3]^3 + 3x[4]^3
julia> c = x[1]^4 + 2x[2]^4 + 4x[3]^4 - x[4]^4
julia> g = [a * c; b * c]
\end{lstlisting}
The zero set of \code{g} consists of a surface in $\mathbb P^3$ of degree $4$ (defined by $c=0$), and of a curve in $\mathbb P^3$ of degree $6 = 2\cdot 3$ (defined by $a=b=0$):
\begin{lstlisting}
julia> nid(g) 
Numerical irreducible decomposition with 2 components
=====================================================
• 1 component(s) of dimension 2.
• 1 component(s) of dimension 1.

 degree table of components:
╭───────────┬───────────────────────╮
│ dimension │ degrees of components │
├───────────┼───────────────────────┤
│     2     │           4           │
│     1     │           6           │
╰───────────┴───────────────────────╯   
\end{lstlisting}

\subsection{Systems of rational functions}

Numerical irreducible decomposition also works for systems of rational functions. The implementation in 
\texttt{HomotopyContinuation.jl} allows such systems as input for \code{nid}.
\begin{lstlisting}
julia> @var x y z
julia> g = [x^2 + y^2 - z; x / (y - 1) + y + z - 1]
julia> nid(g)
Numerical irreducible decomposition with 1 component
====================================================
• 1 component(s) of dimension 1.

 degree table of components:
╭───────────┬───────────────────────╮
│ dimension │ degrees of components │
├───────────┼───────────────────────┤
│     1     │           4           │
╰───────────┴───────────────────────╯
\end{lstlisting}

However, based on experiments, computing the $u$-intersection step for more complicated rational systems appears to be numerically unstable. More research in regeneration for rational systems is required before this can be used safely. 

\subsection{Stress test}
The stress test for the implementation of numerical irreducible decomposition is the introductory example \eqref{ex:nag}. We ran $10,000$ sequential \code{nid} computations for this system and counted how often the output deviated from the true answer (one degree-2 surface, two degree-4 curves and 8 points). The final implementation produced the correct answer consistently throughout $10,000$ runs.

\bigskip 
\section*{Funding}
The author was supported by DFG, German Research Foundation -- Projektnummer 445466444.


\section*{Acknowledgments}

The implementation of numerical irreducible decomposition in \texttt{Julia} has enjoyed several iterations of improvements following extensive user feedback from silviana amethyst, Taylor Brysiewicz, Tobias Boege, Hannah Friedman, Jon Hauenstein, Oskar Henriksson, Max Hill, Ben Hollering, David K.\ Johnson, Frank Sottile, Bernd Sturmfels, Svala Sverrisd\'ottir, and Sascha Timme.  


\bigskip 
\section*{Usage of AI}

AI coding assistants have been used to support the implementation process. All implemented functions have been designed and reviewed by the author. 


\bigskip 
\bibliographystyle{abbrv}
\bibliography{literature}

\bigskip

{\samepage

\noindent {\bf Authors' address:}\\
Paul Breiding, University of Osnabr\"uck \hfill{\tt pbreiding@uni-osnabrueck.de}
}

\end{document}